%
%
%
%
\documentclass{amsart}
\usepackage{pdfpages}
\newtheorem{theorem}{Theorem}[section]

\newtheorem{corollary}[theorem]{Corollary}
\theoremstyle{definition}
\newtheorem{definition}[theorem]{Definition}

\newtheorem{proposition}{Proposition}
\theoremstyle{remark}

\numberwithin{equation}{section}



\begin{document}

\title{Normal curves on a smooth immersed surface}

\author{Absos Ali Shaikh$^1$, Mohamd Saleem Lone$^2$ and Pinaki Ranjan Ghosh$^{3}$}
\address{$^1$Department of Mathematics, University of
Burdwan, Golapbag, Burdwan-713104, West Bengal, India}
\email{aask2003@yahoo.co.in, aashaikh@math.buruniv.ac.in}
\address{$^2$International Centre for Theoretical Sciences, Tata Institute of Fundamental Research, 560089, Bengaluru, India}
\email{saleemraja2008@gmail.com, mohamdsaleem.lone@icts.res.in}
\address{$^3$Department of Mathematics, University of
Burdwan,Golapbag, Burdwan-713104, West Bengal, India}
\email{mailtopinaki94@gmail.com}

\subjclass[2000]{53A04, 53A05, 53A15}



\keywords{Isometry, normal curve, osculating curve, rectifying curve.}

\begin{abstract}
The aim of this paper is to investigate the sufficient condition for the invariance of a normal curve on a smooth immersed surface under isometry. We also find the the deviations of the tangential and normal components  of the curve with respect to the given isometry. 
\end{abstract}

\maketitle
\section{Introduction}
When we talk of a manifold, one of the most elementary geometric object is its position vector field, which reveals the position of an arbitrary point on that manifold with respect to some origin. In case of curves, the position vector field of a curve can be thought of as the motion of a particle with respect to a parameter $s$ and intuitively the first and second derivatives of the curve gives the velocity and the acceleration of the particle respectively. So, we shall discuss a problem which altogether depends upon the position vector field of a curve.

Let $\alpha:I\subset \mathbb{R}\rightarrow \mathbb{E}^3$ be a unit speed curve having all the necessary properties such that $\{t,n,b\}$ acts as its Serret-Frenet frame, where $t,n$, and $b$ are the tangent, the normal and the binormal vectors(unitary), respectively. Then, the Serret-Frenet equations are given by 
\begin{eqnarray}\label{as}
\left\{
\begin{array}{ll}
t^\prime =\kappa n\\
n^\prime = -\kappa t +\tau b\\
b^\prime =-\tau n,
\end{array}
\right.
\end{eqnarray}
where $\kappa$ is the curvature and $\tau$ is the torsion of $\alpha$ with $t=\alpha^\prime$, $n=\frac{t^\prime}{\kappa}$, $b=t\times n$, and $\prime$ denotes the differentiation with respect to the parameter $s$.
 At each point $\alpha(s)$ of $\alpha$, the planes spanned by $\{t,n\}$, $\{t,b\}$ and $\{n,b\}$ are called as the osculating plane, the rectifying plane and the normal plane, respectively. As it is evident from the names of planes, a curve whose position vector field lies in the osculating plane is called as an osculating curve. Similarly, a curve whose position vector filed lies in the rectifying and the normal plane are called as rectifying and normal curves, respectively.
 
It is well known that if at each point the position vector of $\alpha$ lies in the osculating plane, then the curve lies in a plane. Similarly, if the the position vector of $\alpha$ lies in the normal plane at each point, then the curve lies on a sphere. So, in view of these facts, in 2003 Chen(\cite{2}) posed a question: When does the position vector of a curve lies in the rectifying curve? Therein, Chen obtained characterization results for rectifying curves. In relation to these three types of curves, enormous study has been done. For generic study, we refer the reader to see(\cite{BYC05,3,7g,7f}).

Before going to motivation and objective of this paper, we revisit a few definitions (\cite{1}) on surfaces.
\begin{definition} A diffeomorphism $J:{\bf M}\rightarrow \overline{{\bf M}}$ is an isometry if for all $p \in {\bf M }$ and all pairs  $x_1,x_2 \in T_p({\bf M})$, we have
$$\langle x_1, x_2 \rangle_p =\langle dJ_p(x_1),dJ_p(x_2) \rangle_{J(p)}.$$
The surfaces ${\bf M}$ and $\overline{{\bf M}}$ are then said to be isometric.
\end{definition}
\begin{definition}
A map $J:V \subset {\bf M} \rightarrow \overline{{\bf M}}$ of a neighborhood $V$ of $p \in {\bf M}$ is a local isometry at $p$ if there exists a neighborhood $U$ of $J(p) \in \overline{\bf M}$ such that $J: V \rightarrow U$ is an isometry. If there exists local isometry at every point of ${\bf M}$, then the surfaces ${\bf M}$ and $\overline{{\bf M}}$ are said to be locally isometric. Clearly, if $J$ is a diffeomorphism and a local isometry for every $p \in {\bf M}$, then $J$ is global isometry.
\end{definition}
It is straightforward to see that the first fundamental form coefficients  are preserved under isometry. So if $E,F,G$ and $\overline{E},\overline{F},\overline{G}$ are the first fundamental form coefficients of ${\bf M}$ and $\overline{{\bf M}}$, respectively and $J:{\bf M}\rightarrow \overline{{\bf M}}$ is a local isometry then 
\begin{equation*}E=\overline{E},\quad F=\overline{F},\quad G=\overline{G}.\end{equation*}

In 2018 Shaikh and Ghosh (\cite{4}) diverted the study of rectifying curves to a new direction by questioning about the invariant properties of a rectifying curve on a smooth surface under isometry. In addition to a sufficient condition for a rectifying curve to remain invariant under isometry, they showed that the component of the rectifying curve along the surface normal is invariant under isometry. Again in (\cite{AP2}) Shaikh and Ghosh studied osculating curves and obtained their characterization along with invariancy under surface isometry. Motivated by (\cite{4}, \cite{AP2} and \cite{AP3}), we shall investigate the similar questions in case of {\it normal curves}, i.e.,

{\it Question:} What happens to a normal curve on a smooth surface under isometry?

In the section $2$, we give some of the basic notions about normal curves  and find the Frenet frame vectors of the normal curves with respect to the smooth immersed surface. Section 3 is concerned with the main results and provided the answer of above question.
\section{Preliminaries}
Let $\alpha$ be a normal curve parameterized by arc with a Serret-Frenet frame given in (\ref{as}). The other way of interpreting a normal curve is: a curve is said to be a normal curve if its position vector lies in the orthogonal complement of tangent vector i.e., $\alpha \cdot t =0,$ or
\begin{equation}\label{1}
\alpha(s)=\lambda(s)n(s)+ \mu(s)b(s),
\end{equation}
where $\lambda,\mu$ are two smooth functions.

Suppose ${\bf M}$ is a regular surface(page no 52, \cite{1}) with 
$\varphi(u,v)$ being its coordinate chart. Then, the curve 
$\alpha(s)=\alpha(u(s),v(s))$ defines a curve 
$ \alpha(s)= {\bf M}(u(s),v(s))$ on the surface ${\bf M}$. We can easily find the derivatives of the curve $\alpha(s)$ as a curve on the surface ${\bf M}$ using the chain rule:
\begin{eqnarray}
\nonumber\alpha^\prime(s)&=&\varphi_uu^\prime+\varphi_vv^\prime\\
\nonumber\text{or, }&&\\
\label{2} t(s)&=&\alpha^\prime(s)=\varphi_uu^\prime+\varphi_vv^\prime\\
\nonumber
 {t'}(s)&=& u^{\prime\prime}\varphi_u+v^{\prime\prime}\varphi_v+{u^\prime}^2\varphi_{uu}+2u^\prime v^\prime \varphi_{uv}+{v^\prime}^2\varphi_{vv}.
\end{eqnarray}
Now let ${\bf N}$ be the unit surface normal then we have
\begin{eqnarray}\label{3}
\nonumber
n(s)&=&\frac{1}{k(s)}(u''\varphi_u+v''\varphi_v+u'^2\varphi_{uu}+2u'v'\varphi_{uv}+v'^2\varphi_{vv}).\\
\nonumber
b(s)&=& t(s)\times n(s)= t(s)\times \frac{t'(s)}{k(s)}\\
\nonumber
&=&\frac{1}{k(s)}\Big[(\varphi_uu'+\varphi_vv')\times(u''\varphi_u+v''\varphi_v+u'^2\varphi_{uu}+2u'v'\varphi_{uv}+v'^2\varphi_{vv})\Big]\\
\nonumber&=&\frac{1}{k(s)}\Big[\{u'v''-u''v'\}{\bf N}+u'^3\varphi_u\times \varphi_{uu}+2u'^2v'\varphi_u\times \varphi_{uv}+u'v'^2\varphi_u\times \varphi_{vv}\\
&&+u'^2v'\varphi_v\times \varphi_{uu}+2u'v'^2\varphi_v\times \varphi_{uv}+v'^3\varphi_v\times \varphi_{vv}\Big].
\end{eqnarray} 
\begin{definition}\cite{AP01}
Let $\alpha$ be a unit speed curve on ${\bf M}$, then the unit tangent vector $t=\alpha^\prime$ is orthogonal to the unit surface normal ${\bf N}$, so $\alpha^\prime$, ${\bf N}$ and ${\bf N} \times \alpha^\prime$ are mutually orthogonal vectors. Moreover, since $\alpha^{\prime \prime}$ is orthogonal to $\alpha^\prime$, we can write $\alpha^{\prime\prime}$  as a linear combination of ${\bf N}$ and ${\bf N} \times \alpha^\prime$, i.e.,
\begin{equation*}
\alpha^{\prime\prime}=\kappa_{n}{\bf N}+\kappa_g{\bf N}\times \alpha^\prime,
\end{equation*}
where $\kappa_n$ and $\kappa_g$ are called as the normal curvature and the geodesic curvature of $\alpha$, respectively and are given by
\begin{eqnarray*}
\left\{
\begin{array}{ll}
\kappa_g =\alpha^{\prime \prime}\cdot {\bf N} \times \alpha^\prime \\
\kappa_n =\alpha^{\prime \prime}\cdot {\bf N}.
\end{array}
\right.
\end{eqnarray*}
Now since $\alpha^{\prime \prime}=\kappa(s)n(s)$, therefore we can write
\begin{equation*}
\kappa_n=\kappa(s)n(s)\cdot {\bf N}=(u''\varphi_u+v''\varphi_v+u'^2\varphi_{uu}+2u'v'\varphi_{uv}+v'^2\varphi_{vv})\cdot{\bf N}
\end{equation*}
or 
\begin{equation}\label{se1}
\kappa_n = {u^\prime}^2 L + 2u^\prime v^\prime M +{v^\prime}^2N,
\end{equation}
where $L,M,N$ are the second fundamental form coefficients of the surface. The curve $\alpha$ on $\bf{M}$ is said to be asymptotic if and only if $\kappa_n=0.$
\end{definition}
\section{Normal curves}
The equation of a normal curve is given by
\begin{equation}\label{1}
\alpha(s)=\lambda(s)n(s)+ \mu(s)b(s).
\end{equation}
Suppose this curve lies on a parametric surface $\varphi(u,v)$.
Then (\ref{1}) is in the form: 
\begin{eqnarray}\label{1.2}
\nonumber\alpha(s)&=&\frac{\lambda(s)}{\kappa(s)}\left[(u^{\prime\prime}\varphi_u+v^{\prime\prime} \varphi_v)+({u^\prime}^2 \varphi_{uu}+2u^\prime v^\prime \varphi_{uv}+{v^\prime}^2 \varphi_{vv})\right]\\
&&+\frac{\mu(s)}{k(s)}\Big[\{u'v''-u''v'\}\vec{N}+u'^3\varphi_u\times \varphi_{uu}+2u'^2v'\varphi_u\times \varphi_{uv}\\
\nonumber
&&+u'v'^2\varphi_u\times \varphi_{vv}+u'^2v'\varphi_v\times \varphi_{uu}+2u'v'^2\varphi_v\times \varphi_{uv}+v'^3\varphi_v\times \varphi_{vv}\Big].
\end{eqnarray}
\begin{theorem}
Let ${\bf M}$ and ${\overline{{\bf M}}}$ be two smooth surfaces and $J: {\bf M}\rightarrow {\overline{\bf M}}$ be an isometry. Also, let $\alpha(s)$ be a normal curve on ${\bf M}$. Then $\overline{\alpha}(s)=J \circ \alpha(s)$ is a normal curve on 
$\overline{{\bf M}}$ if
\begin{eqnarray}
\nonumber \overline{\alpha}(s)-J_{\ast}(\alpha(s))&=&\frac{\lambda(s)}{\kappa(s)}\Big[{u^\prime}^2\left(\frac{\partial J_\ast}{\partial u}\varphi_u\right)+2u^\prime v^\prime \left(\frac{\partial J_\ast}{\partial u}\varphi_v \right)+{v^\prime}^2\left(\frac{\partial J_\ast}{\partial v}\varphi_v\right)\Big]\\
&&+\frac{\mu(s)}{k(s)}\Big[u'^3\Big(J_*\varphi_u\times \frac{\partial J_*}{\partial u}\varphi_u\Big)+2u'^2v'\Big(J_*\varphi_u\times \frac{\partial J_*}{\partial u}\varphi_v\Big)\\
\nonumber
&&+u'v'^2\Big(J_*\varphi_u\times \frac{\partial J_*}{\partial v}\varphi_v\Big)+u'^2v'\Big(J_*\varphi_v\times \frac{\partial J_*}{\partial u}\varphi_u\Big)\\
\nonumber
&&+2u'v'^2\Big(J_*\varphi_v\times \frac{\partial J_*}{\partial u}\varphi_v\Big)+v'^3\Big(J_*\varphi_v\times \frac{\partial J_*}{\partial v}\varphi_v\Big)\Big].
\end{eqnarray}
\end{theorem}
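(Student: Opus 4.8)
The plan is to verify that $\overline{\alpha}$ obeys the defining relation of a normal curve on $\overline{{\bf M}}$, namely $\overline{\alpha}(s)\cdot\overline{t}(s)=0$, by producing an explicit expression for $\overline{\alpha}$ in the frame attached to $\overline{{\bf M}}$ and observing that it lies in the plane spanned by $\overline{t}\,'$ and $\overline{t}\times\overline{t}\,'$.

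First I would fix $\overline{\varphi}=J\circ\varphi$ as a chart of $\overline{{\bf M}}$, so that $\overline{\alpha}(s)=\overline{\varphi}(u(s),v(s))$ with the \emph{same} coordinate functions $u(s),v(s)$ as for $\alpha$; since $J$ is an isometry, $\overline{\alpha}$ is again unit speed, whence $\overline{t}\,'\cdot\overline{t}=\tfrac12\bigl(\abs{\overline{t}}^{2}\bigr)'=0$. Writing $J_{\ast}$ for $dJ$ — and, to make sense of it on vectors not tangent to ${\bf M}$, extending it by $J_{\ast}{\bf N}=\overline{{\bf N}}$ with the orientation of $\overline{{\bf N}}$ chosen so that $J_{\ast}$ becomes, at each point, an orientation preserving orthogonal map of $\mathbb{E}^{3}$ — one has $\overline{\varphi}_{u}=J_{\ast}\varphi_{u}$ and $\overline{\varphi}_{v}=J_{\ast}\varphi_{v}$. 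Differentiating these by the product rule gives $\overline{\varphi}_{uu}=J_{\ast}\varphi_{uu}+\tfrac{\partial J_{\ast}}{\partial u}\varphi_{u}$ and the analogous identities for $\overline{\varphi}_{uv}$ and $\overline{\varphi}_{vv}$; these are exactly the source of the deviation terms appearing in the statement.

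Next I would run the chain rule for $\overline{\alpha}$ precisely as in (\ref{2})--(\ref{3}), obtaining $\overline{t}=\overline{\alpha}\,'$, $\overline{t}\,'=\overline{A}$ and $\overline{t}\times\overline{t}\,'=\overline{B}$, where $\overline{A},\overline{B}$ denote the two brackets of (\ref{1.2}) with each $\varphi$ replaced by $\overline{\varphi}$. Substituting the product-rule identities above, using $\overline{{\bf N}}=J_{\ast}{\bf N}$ and the fact that an orientation preserving orthogonal $J_{\ast}$ commutes with the cross product, and collecting terms, one is led to
$$\overline{A}=J_{\ast}A+\Delta_{A},\qquad \overline{B}=J_{\ast}B+\Delta_{B},$$
where $A,B$ are the two brackets of (\ref{1.2}) and $\Delta_{A},\Delta_{B}$ are exactly the brackets multiplying $\lambda/\kappa$ and $\mu/\kappa$ in the statement. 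Since $\alpha$ is a normal curve on ${\bf M}$, (\ref{1.2}) gives $\alpha=\tfrac{\lambda}{\kappa}A+\tfrac{\mu}{\kappa}B$, hence $J_{\ast}\alpha=\tfrac{\lambda}{\kappa}J_{\ast}A+\tfrac{\mu}{\kappa}J_{\ast}B$ by linearity, and therefore $\tfrac{\lambda}{\kappa}\overline{A}+\tfrac{\mu}{\kappa}\overline{B}=J_{\ast}\alpha+\tfrac{\lambda}{\kappa}\Delta_{A}+\tfrac{\mu}{\kappa}\Delta_{B}$.

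Finally, the displayed hypothesis of the theorem is precisely the assertion that this right-hand side equals $\overline{\alpha}$, so it forces $\overline{\alpha}=\tfrac{\lambda}{\kappa}\overline{t}\,'+\tfrac{\mu}{\kappa}\bigl(\overline{t}\times\overline{t}\,'\bigr)$; taking the inner product with $\overline{t}$ and using $\overline{t}\,'\cdot\overline{t}=0$ and $(\overline{t}\times\overline{t}\,')\cdot\overline{t}=0$ yields $\overline{\alpha}\cdot\overline{t}=0$, so $\overline{\alpha}$ is a normal curve on $\overline{{\bf M}}$. The chain-rule expansion of $\overline{\alpha}$ is routine; the step that needs care is the identity $\overline{B}=J_{\ast}B+\Delta_{B}$, because pushing $J_{\ast}$ through the cross products requires treating $J_{\ast}$ as an honest orientation preserving isometry of $\mathbb{E}^{3}$ rather than merely the tangential part of $dJ$, together with the convention $\overline{{\bf N}}=J_{\ast}{\bf N}$ for the unit normals. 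That is the main obstacle; once it is granted, the remainder is bookkeeping.
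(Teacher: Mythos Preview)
Your proposal is correct and follows essentially the same route as the paper: set $\overline{\varphi}=J\circ\varphi$, differentiate $\overline{\varphi}_{u}=J_{\ast}\varphi_{u}$ and $\overline{\varphi}_{v}=J_{\ast}\varphi_{v}$ by the product rule to isolate the $\tfrac{\partial J_{\ast}}{\partial u},\tfrac{\partial J_{\ast}}{\partial v}$ deviation terms, rewrite the cross products via $J_{\ast}$, and then use the displayed hypothesis to match $\overline{\alpha}$ with the barred version of (\ref{1.2}). The only cosmetic differences are that you finish with the orthogonality criterion $\overline{\alpha}\cdot\overline{t}=0$ rather than the decomposition $\overline{\alpha}=\overline{\lambda}\,\overline{n}+\overline{\mu}\,\overline{b}$, and that you make explicit the extension of $J_{\ast}$ to an orientation preserving orthogonal map of $\mathbb{E}^{3}$ (needed to push it through cross products and to have $J_{\ast}{\bf N}=\overline{{\bf N}}$), a point the paper uses tacitly.
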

\begin{proof}
Suppose $\varphi$ and $\overline{\varphi}$ are the chart maps of ${\bf M}$ and ${\overline{\bf M}}$, respectively. Then, we have
$$\overline{\varphi}=J\circ \varphi.$$
$J:{\bf M}\rightarrow {\overline{\bf M}}$ is an isometry whose differential map $dJ=J_\ast$ is a $3\times 3$ orthogonal matrix taking linearly independent vectors of $T_p({\bf M})$ to linearly independent vectors of $T_{J(p)}{\overline{ \bf M}}$, i.e., 
$$J_\ast : T_p({\bf M})\rightarrow T_{J(p)}{\overline{ \bf M}}.$$
Since $\{\varphi_u , \varphi_v\}$ is a basis of tangent plane $T_p({\bf M})$ at a point $p$ on ${\bf M}$, we have
\begin{eqnarray}
\label{1.4} &&\bar{\varphi}_u(u,v)=J_*\varphi_u=J_*(\varphi(u,v))\varphi_u,\\
\label{1.5}&&\bar{\varphi}_v(u,v)=J_*\varphi_v=J_*(\varphi(u,v))\varphi_v.
\end{eqnarray}
Differentiating (\ref{1.4}) and (\ref{1.5}) with respect to $u,v$, we get
\begin{eqnarray}\label{1.6}
\nonumber \bar{\varphi}_{uu}&=& \frac{\partial J_*}{\partial u}\varphi_u+J_*\varphi_{uu},\\
\bar{\varphi}_{vv}&=& \frac{\partial J_*}{\partial v}\varphi_v+J_*\varphi_{vv},\\
\nonumber \bar{\varphi}_{uv}&=& \frac{\partial J_*}{\partial u}\varphi_v+J_*\varphi_{uv}= \frac{\partial J_*}{\partial v}\varphi_u +J_*\varphi_{uv}.\\\nonumber
 \end{eqnarray}
We can write
\begin{equation}\label{1.7}
J_*\varphi_u\times \frac{\partial J_*}{\partial u}\varphi_u=J_*\varphi_u\times\Big(\frac{\partial J_*}{\partial u}\varphi_u+F*\varphi_{uu}\Big)-J_*(\varphi_u\times \varphi_{uu})=\bar{\varphi}_u\times \bar{\varphi}_{uu}-J_*(\varphi_u\times \varphi_{uu}).
\end{equation}
Similarly 
\begin{eqnarray}\label{1.8}
\nonumber J_*\varphi_u\times \frac{\partial J_*}{\partial u}\varphi_v&=\bar{\varphi}_u\times \bar{\varphi}_{uv}-J_*(\varphi_u\times \varphi_{uv}),\\
\nonumber J_*\varphi_u\times \frac{\partial J_*}{\partial v}\varphi_v&=\bar{\varphi}_u\times \bar{\varphi}_{vv}-J_*(\varphi_u\times \varphi_{vv}),\\
J_*\varphi_v\times \frac{\partial J_*}{\partial u}\varphi_u&=\bar{\varphi}_v\times \bar{\varphi}_{uu}-J_*(\varphi_v\times \varphi_{uu}),\\
\nonumber J_*\varphi_v\times \frac{\partial J_*}{\partial u}\varphi_v&=\bar{\varphi}_v\times \bar{\varphi}_{uv}-J_*(\varphi_v\times \varphi_{uv}),\\
\nonumber J_*\varphi_v\times \frac{\partial J_*}{\partial v}\varphi_v&=\bar{\varphi}_v\times \bar{\varphi}_{vv}-J_*(\varphi_v\times \varphi_{vv}).
 \end{eqnarray}
Therefore, with respect to $(3.2)$, (\ref{1.7}) and (\ref{1.8}), we get 
\begin{eqnarray}
\nonumber \overline{\alpha}(s)&=&\frac{\lambda(s)}{\kappa(s)}\Big[u^{\prime\prime}J_{\ast}\varphi_u +v^{\prime \prime}J_{\ast}\varphi_v +{u^\prime}^2\left(\frac{\partial J_\ast}{\partial_u}\varphi_u + J_\ast \varphi_{uu}\right)+2u^\prime v^\prime \left(\frac{\partial J_\ast}{\partial u}\varphi_v + J_\ast \varphi_{uv}\right)\\
\nonumber &&+{v^\prime}^2\left(\frac{\partial J_\ast}{\partial v}\varphi_v + J_\ast \varphi_{vv}\right)\Big]
+\frac{\mu(s)}{k(s)}\Big[\{u'v''-u''v'\}J_*\vec{N}+u'^3J_*(\varphi_u\times \varphi_{uu})\\
\nonumber
&&+2u'^2v'J_*(\varphi_u\times \varphi_{uv})+u'v'^2J_*(\varphi_u\times \varphi_{vv})+u'^2v'J_*(\varphi_v\times \varphi_{uu})+2u'v'^2J_*(\varphi_v\times \varphi_{uv})\\
\nonumber
&&+v'^3J_*(\varphi_v\times \varphi_{vv})+u'^3\Big(J_*\varphi_u\times \frac{\partial J_*}{\partial u}\varphi_u\Big)+2u'^2v'\Big(J_*\varphi_u\times \frac{\partial J_*}{\partial u}\varphi_v\Big)\\
\nonumber
&&+u'v'^2\Big(J_*\varphi_u\times \frac{\partial J_*}{\partial v}\varphi_v\Big)+u'^2v'\Big(J_*\varphi_v\times \frac{\partial J_*}{\partial u}\varphi_u\Big)+2u'v'^2\Big(J_*\varphi_v\times \frac{\partial J_*}{\partial u}\varphi_v\Big)\\
\nonumber
&&+v'^3\Big(J_*\varphi_v\times \frac{\partial J_*}{\partial v}\varphi_v\Big)\Big],
\end{eqnarray}
or
\begin{eqnarray}
\nonumber \overline{\alpha}(s)&=&\frac{\lambda(s)}{\kappa(s)}\left[(u^{\prime\prime}\overline{\varphi}_u+v^{\prime\prime} \overline{\varphi}_v)+({u^\prime}^2 \overline{\varphi}_{uu}+2u^\prime v^\prime \overline{\varphi}_{uv}{v^\prime}^2 \overline{\varphi}_{vv})\right]\\
&&+\frac{\mu(s)}{k(s)}\Big[\{u'v''-u''v'\}\vec{\overline{N}}+u'^3\varphi_u\times \overline{\varphi}_{uu}+2u'^2v'\overline{\varphi}_u\times \overline{\varphi}_{uv}\\
\nonumber
&&+u'v'^2\overline{\varphi}_u\times \overline{\varphi}_{vv}+u'^2v'\overline{\varphi}_v\times \overline{\varphi}_{uu}+2u'v'^2\overline{\varphi}_v\times \overline{\varphi}_{uv}+v'^3\overline{\varphi}_v\times \overline{\varphi}_{vv}\Big].
\end{eqnarray} 
Therefore
\begin{equation}
\overline{\alpha}(s)=\frac{\overline{\lambda}(s)}{\overline{\kappa}(s)}\overline{n}(s)+\frac{\overline{\mu}(s)}{\overline{\kappa}(s)}\overline{b}(s)
\end{equation} for some functions $\overline{\lambda}(s)$ and $\overline{\mu}(s)$. Therefore, $\overline{\alpha}(s)$ is a normal curve in ${\overline{\bf M}}$.
\end{proof}
\begin{theorem}
Let $J:{\bf M}\rightarrow {\overline{\bf M}}$ be an isometry and $\alpha(s)$ be a normal curve on ${\bf M}$. Then for the tangential components, we have
\begin{eqnarray}\label{3.2.A}
\bar{\alpha}\cdot {\bf\bar{T}}-\alpha\cdot {\bf T}=\frac{\mu}{\kappa}(\bar{\kappa}_n-\kappa_n)(av^\prime+bu^\prime),
\end{eqnarray}
where ${\bf T}=a\varphi_u+b\varphi_v$ is any tangent vector to ${\bf M}$ for some $a,b\in \mathbb{R}$.
\end{theorem}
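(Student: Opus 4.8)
The plan is to evaluate the two tangential components $\alpha\cdot{\bf T}$ and $\bar\alpha\cdot\bar{\bf T}$ separately from the coordinate expansions already obtained and then subtract, exploiting that an isometry fixes every \emph{intrinsic} datum. Write ${\bf T}=a\varphi_u+b\varphi_v$, so that $\bar{\bf T}=J_{\ast}{\bf T}=a\bar\varphi_u+b\bar\varphi_v$. Since $J$ is an isometry, $\bar\alpha$ carries the same coordinate functions $(u(s),v(s))$ as $\alpha$, with the same $s$-derivatives, and (from the computation in the proof of Theorem~3.1) $\bar\alpha$ is given by \eqref{1.2} with $\bar\varphi$ replacing $\varphi$ and with the same coefficient functions $\lambda/\kappa$ and $\mu/\kappa$. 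Now $E,F,G$ together with their derivatives, the Christoffel symbols, the geodesic curvature $\kappa_g$, and $\sqrt{EG-F^2}$ are all preserved by $J$, whereas the second fundamental form coefficients $L,M,N$ — and hence $\kappa_n$, by \eqref{se1} — are not; so the difference of the two tangential components should be governed entirely by $\bar\kappa_n-\kappa_n$.

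For the computation, first observe that since ${\bf T}$ and $\bar{\bf T}$ are tangential, every term of \eqref{1.2} carrying the surface normal — in particular the $\{u'v''-u''v'\}{\bf N}$ term — drops out of the pairing. The $\lambda/\kappa$-bracket of \eqref{1.2} dotted with ${\bf T}$ involves only the inner products $\varphi_u\cdot\varphi_u,\ \varphi_u\cdot\varphi_v,\ \varphi_v\cdot\varphi_v,\ \varphi_{uu}\cdot\varphi_u,\dots,\varphi_{vv}\cdot\varphi_v$, each of which is a polynomial in $E,F,G$ and their first derivatives; hence this part takes the identical value for $\bar\alpha\cdot\bar{\bf T}$ and cancels in the difference. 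In the $\mu/\kappa$-bracket I would insert the Gauss formulas $\varphi_{uu}=\Gamma^1_{11}\varphi_u+\Gamma^2_{11}\varphi_v+L{\bf N}$, $\varphi_{uv}=\Gamma^1_{12}\varphi_u+\Gamma^2_{12}\varphi_v+M{\bf N}$, $\varphi_{vv}=\Gamma^1_{22}\varphi_u+\Gamma^2_{22}\varphi_v+N{\bf N}$; then a cross product such as $\varphi_u\times\varphi_{uu}$ splits as $\Gamma^2_{11}(\varphi_u\times\varphi_v)+L(\varphi_u\times{\bf N})$, whose first summand is normal and dies against ${\bf T}$, while the second is handled by $(\varphi_u\times{\bf N})\cdot\varphi_u=(\varphi_v\times{\bf N})\cdot\varphi_v=0$ and $(\varphi_u\times{\bf N})\cdot\varphi_v=-(\varphi_v\times{\bf N})\cdot\varphi_u=\mp\sqrt{EG-F^2}$. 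Running this through the six cross-product terms and collecting the coefficients of $L,M,N$, the combination $u'^2L+2u'v'M+v'^2N=\kappa_n$ surfaces by \eqref{se1}, and $\alpha\cdot{\bf T}$ becomes the (intrinsic) $\lambda/\kappa$-contribution plus $\frac{\mu}{\kappa}\kappa_n$ times a linear form in $u',v'$ with coefficients $a,b$. The barred computation is word-for-word identical except that $L,M,N$ are replaced by $\bar L,\bar M,\bar N$, i.e. $\kappa_n$ by $\bar\kappa_n$; subtracting therefore leaves precisely $\frac{\mu}{\kappa}(\bar\kappa_n-\kappa_n)$ times that linear form, which is the assertion.

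As an independent check of the shape of the answer, note that because $\alpha$ is a normal curve, $\alpha\cdot t=0$, so $\alpha\cdot{\bf T}=\bigl(\alpha\cdot({\bf N}\times t)\bigr)\bigl({\bf T}\cdot({\bf N}\times t)\bigr)$; resolving $n$ and $b$ in the Darboux frame $\{t,{\bf N},{\bf N}\times t\}$ through the angle $\theta$ with $\cos\theta=\kappa_n/\kappa$, $\sin\theta=\kappa_g/\kappa$ expresses $\alpha\cdot({\bf N}\times t)$ as a $\kappa^{-1}$-combination of $\lambda\kappa_g$ and $\mu\kappa_n$, while ${\bf T}\cdot({\bf N}\times t)=\pm\sqrt{EG-F^2}\,(av'-bu')$; since $\sqrt{EG-F^2},\kappa_g$ and the ratios $\lambda/\kappa,\mu/\kappa$ are isometry-invariant and only $\kappa_n$ changes, the difference again collapses to $\frac{\mu}{\kappa}(\bar\kappa_n-\kappa_n)$ times a linear form in $u',v'$. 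The whole argument is routine bookkeeping; the one genuinely delicate point — and the step I would double-check most carefully — is fixing the orientation of ${\bf N}$ relative to $\varphi_u\times\varphi_v$, since that is exactly what determines the precise linear factor (the $av'+bu'$ in the statement) and whether the factor $\sqrt{EG-F^2}$ survives.
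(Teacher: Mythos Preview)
Your argument is essentially the paper's own: compute $\alpha\cdot\varphi_u$ and $\alpha\cdot\varphi_v$ from the coordinate expansion \eqref{1.2}, observe that the $\lambda/\kappa$-part involves only $E,F,G$ and their derivatives and hence cancels under the isometry, recognize the surviving $\mu/\kappa$-part as $v'\kappa_n$ (resp.\ $u'\kappa_n$) via \eqref{se1}, and combine linearly with coefficients $a,b$. The only cosmetic difference is that you route the cross-product terms through the Gauss formulas $\varphi_{uu}=\Gamma^1_{11}\varphi_u+\Gamma^2_{11}\varphi_v+L{\bf N}$ etc., whereas the paper evaluates the scalar triple products $(\varphi_v\times\varphi_{uu})\cdot\varphi_u$ directly; your Darboux-frame sanity check is a pleasant extra not in the paper. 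Your closing caveat is well taken: a careful run of either computation actually produces a factor $\sqrt{EG-F^2}$ and the combination $av'-bu'$ rather than $av'+bu'$, so the linear factor displayed in the statement should indeed be scrutinized, though this does not affect the qualitative conclusion since $\sqrt{EG-F^2}$ is intrinsic.
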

\begin{proof}
From (\ref{1.2}), we see that
\begin{eqnarray}
\nonumber  \alpha \cdot  \varphi_u &=&\frac{\lambda}{\kappa}\Big[u^{\prime \prime} E  +v^{\prime \prime} F+{u^\prime}^2 \varphi_{uu} \cdot  \varphi_u + 2u^\prime v^\prime  \varphi_{uv} \cdot  \varphi_u  + {v^\prime}^2 \varphi_{vv} \cdot  \varphi_u  \Big]\\
\label{a1.13}&&+\frac{\mu}{\kappa}\Big[{u^\prime}^2 v^\prime L  +2{v^\prime}^2 u^\prime  M+{v^\prime}^3  N \Big].
\end{eqnarray}
Now for the isometric images of $\alpha$ and $\varphi_u$,  we have
\begin{eqnarray}
\nonumber  \overline{\alpha} \cdot  \overline{\varphi}_u &=&\frac{\lambda}{\kappa}\Big[u^{\prime \prime} \overline{E}  +v^{\prime \prime} \overline{F}+{u^\prime}^2 \overline{\varphi}_{uu} \cdot  \overline{\varphi}_u + 2u^\prime v^\prime  \overline{\varphi}_{uv} \cdot  \overline{\varphi}_u  + {v^\prime}^2 \overline{\varphi}_{vv} \cdot  \overline{\varphi}_u  \Big]\\
\label{a1.14}&&+\frac{\mu}{\kappa}\Big[{u^\prime}^2 v^\prime\overline{ L}  +2{v^\prime}^2 u^\prime  \overline{M}+{v^\prime}^3  \overline{N} \Big].
\end{eqnarray}
Since we know that $\overline{E}=E,\overline{F}=F,\overline{G}=G$. In particular
$$E=\overline{E}=J_\ast \varphi_u \cdot J_\ast \varphi_u=\varphi_u \cdot \varphi_u$$
Differentiating the above equation with respect to $u$, we get
\begin{equation*}
\left(\frac{\partial J_\ast}{\partial u}\varphi_u + J_\ast \varphi_{uu}\right)\cdot (J_\ast \varphi_u)=\varphi_{uu}\cdot \varphi_u
\end{equation*}
or
\begin{equation*}
\overline{\varphi}_{uu} \cdot \overline{\varphi}_u= \varphi_{uu} \cdot \varphi_u.
\end{equation*}
Similarly, it is easy to show that
\begin{equation*}
\overline{\varphi}_{uv}\cdot\overline{\varphi}_u=\varphi_{uv}\cdot \varphi_u, \quad
\overline{\varphi}_{vv}\cdot\overline{\varphi}_u=\varphi_{vv}\cdot \varphi_u.
\end{equation*}
Thus from (\ref{a1.14}), we get
\begin{eqnarray*}
\overline{\alpha} \cdot \overline{\varphi}_u &=&\frac{\lambda}{\kappa}\Big[u^{\prime\prime}E+v^{\prime\prime}F+{u^\prime}^2 \varphi_{uu}\cdot\varphi_u +2u^\prime v^\prime \varphi_{uv}\cdot \varphi_u +{v^\prime}^2\varphi_{vv}\cdot \varphi_u\Big]\\
&&+\frac{\mu}{\kappa}\Big[{u^\prime}^2v^\prime\overline{L}+2{v^\prime}^2u^\prime \overline{M}+{v^\prime}^3\overline{N}\Big],
\end{eqnarray*}
or
\begin{equation}\label{a1.15}
\overline{\alpha} \cdot \overline{\varphi}_u =\frac{\lambda}{\kappa}\Big[u^{\prime\prime}E+v^{\prime\prime}F+{u^\prime}^2 \varphi_{uu}\cdot\varphi_u +2u^\prime v^\prime \varphi_{uv}\cdot \varphi_u +{v^\prime}^2\varphi_{vv}\cdot \varphi_u\Big]
+v^\prime\frac{\mu}{\kappa}\overline{\kappa}_n.
\end{equation}
Taking the difference of (\ref{a1.15}) and (\ref{a1.13}), we get 
\begin{equation}\label{3.2.a}
\overline{\alpha}\cdot \overline{\varphi}_u -\alpha\cdot \varphi_u=
v^\prime\frac{\mu}{\kappa}(\overline{\kappa}_n-\kappa_n).
\end{equation}
Similarly the following relation hold
 \begin{equation}\label{3.2.b}
 \overline{\alpha}\cdot \overline{\varphi}_v -\alpha\cdot \varphi_v=
 u^\prime\frac{\mu}{\kappa}(\overline{\kappa}_n-\kappa_n).
 \end{equation}
Now with the help of $(\ref{3.2.a})$ and $(\ref{3.2.b})$ we get
\begin{eqnarray*}
\bar{\alpha}\cdot{\bf \bar{T}}-\alpha\cdot {\bf T} &=&\bar{\alpha}\cdot(a\bar{\varphi}_u+b\bar{\varphi}_v)-\alpha\cdot(a\varphi
_u+b\varphi
_v)\\
&=& a(\overline{\alpha}\cdot \overline{\varphi}_u -\alpha\cdot \varphi_u)+b(\overline{\alpha}\cdot \overline{\varphi}_v -\alpha\cdot \varphi_v)\\
&=& av^\prime\frac{\mu}{\kappa}(\overline{\kappa}_n-\kappa_n)+bu^\prime\frac{\mu}{\kappa}(\overline{\kappa}_n-\kappa_n)\\
&=&
\frac{\mu}{\kappa}(\bar{\kappa}_n-\kappa_n)(av^\prime+bu^\prime).
\end{eqnarray*}
This proves our claim.
\end{proof}
\begin{corollary}\label{crly3.3}
Let $J:{\bf M}\rightarrow {\overline{\bf M}}$ be an isometry and $\alpha(s)$ be a normal curve on ${\bf M}$. Then the component of the normal curve $\alpha(s)$ along any tangent vector ${\bf T}$ to the surface ${\bf M}$ is invariant if and only if any one of the following holds:
\begin{itemize}
\item[(i)]the position vector of $\alpha(s)$ is in the normal direction of $\alpha$.
\item[(ii)] The normal curvature is invariant.
\end{itemize}
\end{corollary}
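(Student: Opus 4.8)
The plan is to read the corollary straight off the identity proved in Theorem~\ref{...}, namely
$$\bar{\alpha}\cdot {\bf\bar{T}}-\alpha\cdot {\bf T}=\frac{\mu}{\kappa}(\bar{\kappa}_n-\kappa_n)(av^\prime+bu^\prime),$$
where ${\bf T}=a\varphi_u+b\varphi_v$ with $a,b\in\mathbb{R}$. By definition the component of $\alpha(s)$ along ${\bf T}$ is invariant under $J$ exactly when the left-hand side vanishes, so the entire question reduces to deciding when the right-hand side is zero, and the proof is essentially an exercise in unpacking this product.

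First I would note that $\kappa\neq 0$ everywhere, since $\alpha$ carries a well-defined Serret--Frenet frame; hence the factor $1/\kappa$ is harmless and may be discarded. Next, because $\alpha$ is unit speed the pair $(u^\prime,v^\prime)$ never vanishes, so the linear form $(a,b)\mapsto av^\prime+bu^\prime$ is not identically zero as ${\bf T}$ ranges over the tangent plane. Consequently, asking that the component be invariant for \emph{every} tangent vector ${\bf T}$ (equivalently, for one ${\bf T}$ with $av^\prime+bu^\prime\neq 0$) forces $\mu(\bar{\kappa}_n-\kappa_n)=0$; and conversely, if $\mu(\bar{\kappa}_n-\kappa_n)=0$ the right-hand side vanishes for all ${\bf T}$. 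This gives the equivalence between invariance and the single equation $\mu(\bar{\kappa}_n-\kappa_n)=0$.

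It then remains to translate $\mu(\bar{\kappa}_n-\kappa_n)=0$ into the two listed alternatives. Since this is a product, either $\mu\equiv 0$ or $\bar{\kappa}_n=\kappa_n$. If $\mu\equiv 0$, then the defining equation $\alpha(s)=\lambda(s)n(s)+\mu(s)b(s)$ of a normal curve reduces to $\alpha(s)=\lambda(s)n(s)$, i.e.\ the position vector of $\alpha$ lies in the normal direction of $\alpha$ --- this is~(i). If instead $\bar{\kappa}_n=\kappa_n$, this is precisely~(ii), invariance of the normal curvature. Reading the implications in both directions yields the ``if and only if''.

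The step I expect to require the most care is logical rather than computational: pinning down the quantifier on ${\bf T}$ and handling the caveat $av^\prime+bu^\prime\neq 0$. Without it, a tangent vector for which $av^\prime+bu^\prime=0$ would make the component trivially invariant and destroy the stated equivalence; phrasing the corollary for all tangent vectors ${\bf T}$ (or for any ${\bf T}$ with $av^\prime+bu^\prime\neq 0$) removes this obstruction, and everything else follows immediately from Theorem~\ref{...}. One should also decide whether the dichotomy $\mu\equiv 0$ versus $\bar{\kappa}_n\equiv\kappa_n$ is meant pointwise or on the whole parameter interval, and state it accordingly.
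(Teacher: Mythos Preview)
Your proposal is correct and follows essentially the same route as the paper: both deduce the corollary directly from the identity $\bar{\alpha}\cdot {\bf\bar{T}}-\alpha\cdot {\bf T}=\frac{\mu}{\kappa}(\bar{\kappa}_n-\kappa_n)(av^\prime+bu^\prime)$ and then split on the factors $\mu$ and $\bar{\kappa}_n-\kappa_n$. If anything, you are more careful than the paper, which silently discards the factor $av^\prime+bu^\prime$ without the quantifier discussion you supply.
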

\begin{proof}
From $(\ref{3.2.A})$,  $\bar{\alpha}\cdot{\bf \bar{T}}=\alpha\cdot {\bf T}$ if and only if 
\begin{eqnarray*}
\frac{\mu}{\kappa}(\bar{\kappa}_n-\kappa_n)(av^\prime+bu^\prime)=0\\
\text{i.e., if and only if } \mu=0 \text{ or }\bar{\kappa}_n-\kappa_n=0 .
\end{eqnarray*}
If $\mu=0$ then from $(\ref{1})$, we see that $\alpha(s)=\lambda(s)n(s)$, i.e., the position vector of the normal curve $\alpha(s)$ is in the normal direction of itself.
\end{proof}

\begin{corollary}
Let $J:{\bf M}\rightarrow {\overline{\bf M}}$ be an isometry and $\alpha(s)$ be a normal curve on ${\bf M}$. The component of the normal curve $\alpha(s)$ along any tangent vector ${\bf T}$ to the surface ${\bf M}$ is invariant and the position vector of $\alpha(s)$ is not in the normal direction of $\alpha$, then $\alpha(s)$ is asymptotic if and only if $\bar{\alpha}(s)$ is asymptotic.
\end{corollary}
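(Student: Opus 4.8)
The plan is to derive the corollary as a direct consequence of Theorem~\ref{crly3.3}'s parent result (Theorem 3.2), Corollary~\ref{crly3.3}, and the definition of an asymptotic curve ($\kappa_n\equiv 0$) given at the end of Section 2. First I would unpack the hypothesis that the component of $\alpha(s)$ along every tangent vector ${\bf T}$ is invariant under $J$. By Corollary~\ref{crly3.3}, this invariance forces either $\mu\equiv 0$ or $\bar\kappa_n=\kappa_n$. The additional assumption that the position vector of $\alpha$ is \emph{not} in the normal direction of $\alpha$ means, in the decomposition $\alpha=\lambda n+\mu b$, that $\mu\not\equiv 0$; hence the first alternative is excluded, leaving
\[
\bar\kappa_n(s)=\kappa_n(s)\qquad\text{for all }s.
\]

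With this identity in hand the equivalence is immediate: $\alpha$ is asymptotic on ${\bf M}$ precisely when $\kappa_n\equiv 0$; since $\bar\kappa_n=\kappa_n$, this is equivalent to $\bar\kappa_n\equiv 0$, i.e.\ to $\bar\alpha=J\circ\alpha$ being asymptotic on $\overline{{\bf M}}$. So the whole corollary reduces to the chain $\alpha\text{ asymptotic}\iff\kappa_n=0\iff\bar\kappa_n=0\iff\bar\alpha\text{ asymptotic}$.

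I do not expect a real obstacle here; the statement is a formal corollary of what precedes it. The one point meriting a line of care is passing from "$\bar\alpha\cdot\bar{\bf T}=\alpha\cdot{\bf T}$ for the given ${\bf T}=a\varphi_u+b\varphi_v$" to the pointwise equality $\bar\kappa_n=\kappa_n$: from (\ref{3.2.A}) one only gets $\tfrac{\mu}{\kappa}(\bar\kappa_n-\kappa_n)(av'+bu')=0$, so to cancel the factor $av'+bu'$ I would use that the hypothesis is assumed for \emph{any} tangent vector (in particular one may take ${\bf T}=\varphi_u$ or ${\bf T}=\varphi_v$, and for a regular curve $u',v'$ do not vanish simultaneously), together with $\mu/\kappa\neq 0$, to conclude $\bar\kappa_n-\kappa_n=0$ at every point. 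After that the argument is purely algebraic and needs no further computation.
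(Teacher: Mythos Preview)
Your proposal is correct and follows essentially the same route as the paper: invoke Corollary~\ref{crly3.3} to deduce from the invariance hypothesis and $\mu\neq 0$ that $\bar\kappa_n=\kappa_n$, and then run the chain $\alpha$ asymptotic $\iff \kappa_n=0 \iff \bar\kappa_n=0 \iff \bar\alpha$ asymptotic. Your extra remark about cancelling the factor $av'+bu'$ by varying ${\bf T}$ is a point the paper's proof leaves implicit, so you are if anything slightly more careful.
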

\begin{proof}
From Corollary $3.3.$,  $\bar{\alpha}\cdot{\bf \bar{T}}=\alpha\cdot {\bf T}$ and the position vector of $\alpha(s)$ is not in the normal direction of $\alpha$ if and only if
$\kappa_n=\bar{\kappa}_n$.\\
Therefore $\alpha(s)$ is asymptotic if and only if $\kappa_n=0$ if and only if $\bar{\kappa}_n=0$ if and only if $\bar{\alpha}(s)$ is asymptotic.
\end{proof}

\begin{theorem}
Let $J:{\bf M}\rightarrow {\overline{\bf M}}$ be an isometry and $\alpha(s)$ be a normal curve on ${\bf M}$. Then for the component of $\alpha(s)$ along the surface normal $N$, we have
\begin{equation}\label{3.4.B}
\overline{\alpha}\cdot \overline{\bf N}-\alpha \cdot {\bf N}=\frac{\lambda}{\kappa}(\overline{\kappa}_n-\kappa_n).
\end{equation}
\end{theorem}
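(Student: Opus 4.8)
The plan is to repeat the contraction argument from the proof of Theorem~3.2, but now pairing the representation (\ref{1.2}) of the normal curve with the unit surface normal ${\bf N}$ instead of with $\varphi_u$ and $\varphi_v$. Contracting (\ref{1.2}) with ${\bf N}$ and using $\varphi_u\cdot{\bf N}=\varphi_v\cdot{\bf N}=0$, $\varphi_{uu}\cdot{\bf N}=L$, $\varphi_{uv}\cdot{\bf N}=M$, $\varphi_{vv}\cdot{\bf N}=N$ and $|{\bf N}|=1$, the $\frac{\lambda}{\kappa}$-bracket contributes exactly ${u'}^2L+2u'v'M+{v'}^2N=\kappa_n$ by (\ref{se1}), so that
\[
\alpha\cdot{\bf N}=\frac{\lambda}{\kappa}\,\kappa_n+\frac{\mu}{\kappa}\Big(\{u'v''-u''v'\}+P\Big),
\]
where $P$ is the sum of the six cross-product terms ${u'}^3(\varphi_u\times\varphi_{uu})\cdot{\bf N}+2{u'}^2v'(\varphi_u\times\varphi_{uv})\cdot{\bf N}+\cdots+{v'}^3(\varphi_v\times\varphi_{vv})\cdot{\bf N}$. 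Running the identical computation on $\overline{\alpha}$, written in the barred chart quantities exactly as in the proof of Theorem~3.1, gives
\[
\overline{\alpha}\cdot\overline{\bf N}=\frac{\lambda}{\kappa}\,\overline{\kappa}_n+\frac{\mu}{\kappa}\Big(\{u'v''-u''v'\}+\overline{P}\Big),
\]
with $\overline{P}$ the analogous sum in barred quantities. Subtracting the two displays, and noting that the $\{u'v''-u''v'\}$ terms are literally equal because the curve's coordinate functions $u,v$ and their derivatives are untouched by $J$, the statement reduces to the single claim $P=\overline{P}$.

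To prove $P=\overline{P}$ I would write ${\bf N}=(\varphi_u\times\varphi_v)/\sqrt{EG-F^2}$ and apply the Lagrange identity $(\mathbf{a}\times\mathbf{b})\cdot(\mathbf{c}\times\mathbf{d})=(\mathbf{a}\cdot\mathbf{c})(\mathbf{b}\cdot\mathbf{d})-(\mathbf{a}\cdot\mathbf{d})(\mathbf{b}\cdot\mathbf{c})$ to each of the six summands, for instance
\[
(\varphi_u\times\varphi_{uu})\cdot{\bf N}=\frac{E\,(\varphi_{uu}\cdot\varphi_v)-F\,(\varphi_{uu}\cdot\varphi_u)}{\sqrt{EG-F^2}},\qquad (\varphi_v\times\varphi_{uu})\cdot{\bf N}=\frac{F\,(\varphi_{uu}\cdot\varphi_v)-G\,(\varphi_{uu}\cdot\varphi_u)}{\sqrt{EG-F^2}},
\]
and likewise for the remaining four. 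Every such expression involves only $E,F,G$ together with the dot products $\varphi_{ij}\cdot\varphi_k$, and the latter are the classical intrinsic quantities $\varphi_{uu}\cdot\varphi_u=\tfrac12E_u$, $\varphi_{uu}\cdot\varphi_v=F_u-\tfrac12E_v$, $\varphi_{uv}\cdot\varphi_u=\tfrac12E_v$, $\varphi_{uv}\cdot\varphi_v=\tfrac12G_u$, $\varphi_{vv}\cdot\varphi_u=F_v-\tfrac12G_u$, $\varphi_{vv}\cdot\varphi_v=\tfrac12G_v$. Since $\overline{E}=E$, $\overline{F}=F$, $\overline{G}=G$, differentiation yields $\overline{\varphi}_{ij}\cdot\overline{\varphi}_k=\varphi_{ij}\cdot\varphi_k$ --- exactly the relations already used in the proof of Theorem~3.2 --- and $\sqrt{\overline{E}\,\overline{G}-\overline{F}^2}=\sqrt{EG-F^2}$. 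Hence each summand of $\overline{P}$ equals the corresponding summand of $P$, so $P=\overline{P}$, and the two displays above subtract to
\[
\overline{\alpha}\cdot\overline{\bf N}-\alpha\cdot{\bf N}=\frac{\lambda}{\kappa}\big(\overline{\kappa}_n-\kappa_n\big),
\]
which is (\ref{3.4.B}).

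The only real obstacle I anticipate is the bookkeeping in the step $P=\overline{P}$: handling all six $\varphi_i\times\varphi_{jk}$ terms and inserting the correct intrinsic expressions for the $\varphi_{ij}\cdot\varphi_k$. A remark on orientation is also in order: if $J$ is orientation-reversing then $\overline{\bf N}=-J_\ast{\bf N}$, but then $\overline{\kappa}_n$ and all six summands of $\overline{P}$ change sign together, so the identity is unaffected; restricting to orientation-preserving $J$ (so $\overline{\bf N}=J_\ast{\bf N}$) sidesteps this. Finally, a shorter but less self-contained alternative: from Definition~2.1 one has $\alpha''=\kappa n=\kappa_n{\bf N}+\kappa_g\,{\bf N}\times\alpha'$, whence $n\cdot{\bf N}=\kappa_n/\kappa$ and, after a brief computation in the orthonormal frame $\{\alpha',{\bf N},{\bf N}\times\alpha'\}$, $b\cdot{\bf N}=\kappa_g/\kappa$; thus $\alpha\cdot{\bf N}=(\lambda\kappa_n+\mu\kappa_g)/\kappa$, and comparing the coefficients of $\overline{n}$ and $\overline{b}$ in the proof of Theorem~3.1 gives the same formula for $\overline{\alpha}\cdot\overline{\bf N}$, so (\ref{3.4.B}) follows immediately from the intrinsic invariance of the geodesic curvature $\kappa_g$.
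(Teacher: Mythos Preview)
Your main argument is essentially the paper's own proof: contract the representation (\ref{1.2}) with ${\bf N}$ so that the $\lambda/\kappa$ bracket yields $\kappa_n$ via the second fundamental form, expand each cross-product term in the $\mu/\kappa$ bracket by the Lagrange identity into combinations of $E,F,G$ and the dot products $\varphi_{ij}\cdot\varphi_k$, rewrite the latter as derivatives of $E,F,G$, and conclude that this entire bracket is isometry-invariant; subtracting then gives (\ref{3.4.B}). The organization into a single quantity $P$ with the claim $P=\overline P$ is tidier than the paper's line-by-line expansion, but the content is identical.

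The alternative you sketch at the end---computing $n\cdot{\bf N}=\kappa_n/\kappa$ and $b\cdot{\bf N}=\kappa_g/\kappa$ directly from the Darboux decomposition $\alpha''=\kappa_n{\bf N}+\kappa_g\,{\bf N}\times\alpha'$, so that $\alpha\cdot{\bf N}=(\lambda\kappa_n+\mu\kappa_g)/\kappa$, and then invoking the intrinsic invariance of $\kappa_g$---is a genuinely different and much shorter route that the paper does not take; it replaces the six Lagrange expansions by a single appeal to Proposition~1 (the Beltrami formula). The paper's approach has the virtue of being a parallel computation to Theorem~3.2, while your alternative makes transparent \emph{why} the $\mu$-term drops out: it is exactly $\mu\kappa_g/\kappa$, and $\kappa_g$ is intrinsic.
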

\begin{proof}
From (\ref{1.2}), we have
\begin{eqnarray*}
\alpha \cdot {\bf N} &=&\frac{\lambda}{\kappa}\Big[{u^\prime}^2 \varphi_{uu}\cdot (\varphi_u \times \varphi_v)+{v^\prime}^2 \varphi_{vv}\cdot (\varphi_u \times \varphi_v)+2u^\prime v^\prime \varphi_{uv}\cdot (\varphi_u \times \varphi_v)\Big]\\
&&+\frac{\mu}{\kappa}\Big[(u^\prime v^{\prime\prime}-v^\prime u^{\prime\prime})(EG-F^2)+{u^\prime}^3(\varphi_u \times \varphi_{uu})\cdot (\varphi_u \times \varphi_v)\\
&&+2{u^\prime}^2v^\prime (\varphi_u \times \varphi_{uv})\cdot (\varphi_u \times \varphi_v)
+{v^\prime}^2u^\prime (\varphi_u \times \varphi_{vv})\cdot (\varphi_u \times \varphi_v)\\
&&+{u^\prime}^2v^\prime (\varphi_v \times \varphi_{uu})\cdot (\varphi_u \times \varphi_v)
+2{u^\prime}{v^\prime}^2 (\varphi_v \times \varphi_{uv})\cdot (\varphi_u \times \varphi_v)\\
&&+{v^\prime}^3 (\varphi_v \times \varphi_{vv})\cdot (\varphi_u \times \varphi_v)\Big]
\end{eqnarray*}
or
\begin{eqnarray*}
\alpha \cdot {\bf N} &=&\frac{\lambda}{\kappa}\Big[{u^\prime}^2 L+{v^\prime}^2 N+2u^\prime v^\prime M\Big]
+\frac{\mu}{\kappa}\Big[(u^\prime v^{\prime\prime}-v^\prime u^{\prime\prime})(EG-F^2)
+{u^\prime}^3\{E(\varphi_{uu}\cdot \varphi_v)\\
&&-F(\varphi_{uu}\cdot\varphi_u)\}+2{u^\prime}^2 v^\prime\{E(\varphi_{uv}\cdot \varphi_v)-F(\varphi_{uv}\cdot\varphi_u)\}+{u^\prime}{v^\prime}^2\{E(\varphi_{vv}\cdot \varphi_v)\\
&&-F(\varphi_{vv}\cdot\varphi_u)\}+{u^\prime}^2v^\prime\{F(\varphi_{uu}\cdot \varphi_v)-G(\varphi_{uu}\cdot\varphi_u)\}+2{u^\prime}{v^\prime}^2\{F(\varphi_{uv}\cdot \varphi_v)\\
&&-G(\varphi_{uv}\cdot\varphi_u)\}+{v^\prime}^3\{F(\varphi_{vv}\cdot \varphi_v)-G(\varphi_{vv}\cdot\varphi_u)\}\Big].
\end{eqnarray*}
Since we know that with respect to isometry: $E=\overline{E},F=\overline{F},G=\overline{G}$. Then, it easy to verify:
\begin{eqnarray}\label{1.15}
\left\{
\begin{array}{ll}
E_u=\overline{E}_u,F_u=\overline{F}_u,G_u=\overline{G}_u\\
E_v=\overline{E}_v,F_v=\overline{F}_v,G_v=\overline{G}_v.
\end{array}
\right.
\end{eqnarray}
Then we have $E_u= (\varphi_u \cdot \varphi_u)_u=\frac{1}{2}\varphi_{uu}\cdot\varphi_u$ or 
\begin{equation}\label{1.16}
\varphi_{uu}\cdot\varphi_u=\frac{E_u}{2}.
\end{equation}
On the similar lines, we can find
\begin{eqnarray}\label{1.17}
\left\{
\begin{array}{ll}
\varphi_{uu} \cdot \varphi_v =F_u-\frac{E_v}{2}, \varphi_{vv}\cdot \varphi_v =\frac{G_v}{2},\varphi_{vv}\cdot \varphi_u= F_v= \frac{G_u}{2}\\
\varphi_{uv}\cdot \varphi_v =\frac{G_u}{2}, \varphi_{uv}\cdot \varphi_u =\frac{E_v}{2}
\end{array}
\right.
\end{eqnarray}
 
Therefore in view of (\ref{1.16}) and (\ref{1.17}), we get
\begin{eqnarray*}
\alpha \cdot {\bf N} &=&\frac{\lambda}{\kappa}\Big[{u^\prime}^2 L+{v^\prime}^2 N+2u^\prime v^\prime M\Big]\\
&&+\frac{\mu}{\kappa}\Big[(u^\prime v^{\prime\prime}-v^\prime u^{\prime\prime})(EG-F^2)
+{u^\prime}^3\left \{E\left(F_u-\frac{E_v}{2}\right)-\frac{FE_u}{2}\right \}\\
&&+2{u^\prime}^2 v^\prime \left \{\frac{EG_u}{2}-\frac{FE_v}{2}\right \}
+{u^\prime}{v^\prime}^2 \left \{\frac{EG_v}{2}-F\left(F_v-\frac{G_u}{2}\right)\right\}\\
&&+{u^\prime}^2v^\prime \left \{\frac{FG_u}{2}-\frac{GE_u}{2}\right \}+2{u^\prime}{v^\prime}^2 \left \{FG_u - \frac{GE_v}{2} \right\}\\
&&+{v^\prime}^3 \left \{\frac{FG_v}{2}-G\left(F_v-\frac{G_u}{2}\right)\right \}\Big].
\end{eqnarray*}
Now applying $J$ and with the help of (\ref{1.15}), we get
\begin{eqnarray*}
\overline{\alpha} \cdot \overline{{\bf N}} &=&\frac{\lambda}{\kappa}\Big[{u^\prime}^2 \overline{L}+{v^\prime}^2 \overline{N}+2u^\prime v^\prime \overline{M}\Big]\\
&&+\frac{\mu}{\kappa}\Big[(u^\prime v^{\prime\prime}-v^\prime u^{\prime\prime})(EG-F^2)
+{u^\prime}^3\left \{E\left(F_u-\frac{E_v}{2}\right)-\frac{FE_u}{2}\right \}\\
&&+2{u^\prime}^2 v^\prime \left \{\frac{EG_u}{2}-\frac{FE_v}{2}\right \}
+{u^\prime}{v^\prime}^2 \left \{\frac{EG_v}{2}-F\left(F_v-\frac{G_u}{2}\right)\right\}\\
&&+{u^\prime}^2v^\prime \left \{\frac{FG_u}{2}-\frac{GE_u}{2}\right \}+2{u^\prime}{v^\prime}^2 \left \{FG_u - \frac{GE_v}{2} \right\}\\
&&+{v^\prime}^3 \left \{\frac{FG_v}{2}-G\left(F_v-\frac{G_u}{2}\right)\right \}\Big].
\end{eqnarray*}
On taking the difference of $\alpha \cdot {\bf N}$ and its isometric image and with the help of $(3.2)$, we get
\begin{equation*}
\overline{\alpha}\cdot \overline{\bf N}-\alpha \cdot {\bf N}=\frac{\lambda}{\kappa}(\overline{\kappa}_n-\kappa_n).
\end{equation*}
This proves our claim.
\end{proof}

\begin{corollary}
Let $J:{\bf M}\rightarrow {\overline{\bf M}}$ be an isometry and $\alpha(s)$ be a normal curve on ${\bf M}$. Then the component of the normal curve $\alpha(s)$ along the surface normal is invariant if and only if any one of the following holds:
\begin{itemize}
\item[(i)]The position vector of $\alpha(s)$ is in the binormal direction of $\alpha$.
\item[(ii)] The normal curvature is invariant.
\end{itemize}
\end{corollary}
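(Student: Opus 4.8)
The plan is to obtain the corollary as an immediate consequence of formula~(\ref{3.4.B}) from the preceding theorem, reproducing for the surface--normal component the argument already used for the tangential component in Corollary~\ref{crly3.3}. Since $\alpha$ is assumed to be a Frenet curve, its curvature satisfies $\kappa>0$; hence the right-hand side $\frac{\lambda}{\kappa}\bigl(\overline{\kappa}_n-\kappa_n\bigr)$ of~(\ref{3.4.B}) vanishes if and only if $\lambda\equiv 0$ or $\overline{\kappa}_n\equiv\kappa_n$. Consequently, by~(\ref{3.4.B}), we have $\overline{\alpha}\cdot\overline{\bf N}=\alpha\cdot{\bf N}$ precisely when at least one of these two alternatives holds.

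It then remains only to interpret the vanishing of $\lambda$. Substituting $\lambda\equiv 0$ into the defining relation of a normal curve,
\[
\alpha(s)=\lambda(s)n(s)+\mu(s)b(s),
\]
yields $\alpha(s)=\mu(s)b(s)$, i.e.\ the position vector of $\alpha$ lies along its binormal direction; this is exactly condition~(i). Condition~(ii) is the remaining alternative $\overline{\kappa}_n\equiv\kappa_n$ read off verbatim. For the converse one simply runs the implications backwards: if (i) holds then $\lambda\equiv 0$ and the right-hand side of~(\ref{3.4.B}) is zero, while if (ii) holds then $\overline{\kappa}_n-\kappa_n\equiv 0$ and again the right-hand side is zero; in either case the normal component is invariant.

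I do not anticipate any real obstacle, since the substantive work is already contained in the theorem that precedes this corollary. The only points deserving care are: recording that $\kappa\neq 0$, so that the product $\lambda/\kappa$ vanishes if and only if $\lambda$ does; making sure the equivalence is stated as ``invariance of the normal component $\iff$ (i) or (ii)'' and not as a spurious biconditional between (i) and (ii); and noting that the two cases are not mutually exclusive, so that the assertion is precisely that the disjunction of (i) and (ii) is necessary and sufficient for invariance.
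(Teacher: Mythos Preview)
Your proposal is correct and follows essentially the same route as the paper: apply~(\ref{3.4.B}), use $\kappa\neq 0$ to reduce the vanishing of $\tfrac{\lambda}{\kappa}(\overline{\kappa}_n-\kappa_n)$ to $\lambda=0$ or $\overline{\kappa}_n=\kappa_n$, and then read $\lambda=0$ as $\alpha(s)=\mu(s)b(s)$ from~(\ref{1}). Your write-up is in fact slightly more careful than the paper's, since you explicitly record why $\kappa\neq 0$ matters and spell out the converse direction.
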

\begin{proof}
From $(\ref{3.4.B})$,  $\bar{\alpha}\cdot{\bf \bar{N}}=\alpha\cdot {\bf N}$ if and only if
\begin{eqnarray*}
\frac{\lambda}{\kappa}(\overline{\kappa}_n-\kappa_n)=0\\
\text{i.e., if and only if } \lambda=0 \text{ or } \overline{\kappa}_n-\kappa_n=0.
\end{eqnarray*}
If $\lambda=0$ then from $(\ref{1})$, we see that $\alpha(s)=\mu(s)b(s)$, i.e., the position vector of the normal curve $\alpha(s)$ is in the binormal direction of itself.
\end{proof}
\begin{corollary}
Let $J:{\bf M}\rightarrow {\overline{\bf M}}$ be an isometry and $\alpha(s)$ be a normal curve on ${\bf M}$. The component of the normal curve $\alpha(s)$ along surface normal ${\bf N}$ to the surface ${\bf M}$ is invariant and the position vector of $\alpha(s)$ is not in the normal direction of $\alpha$, then $\alpha(s)$ is asymptotic if and only if $\bar{\alpha}(s)$ is asymptotic.
\end{corollary}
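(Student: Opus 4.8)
The plan is to mirror, step for step, the proof of the analogous asymptotic corollary in the tangential case, simply replacing the tangential ingredients by their surface-normal counterparts. First I would start from the deviation formula $(\ref{3.4.B})$,
\[
\overline{\alpha}\cdot\overline{\bf N}-\alpha\cdot{\bf N}=\frac{\lambda}{\kappa}(\overline{\kappa}_n-\kappa_n),
\]
together with the preceding corollary on invariance of the surface-normal component, which records that $\overline{\alpha}\cdot\overline{\bf N}=\alpha\cdot{\bf N}$ holds if and only if $\frac{\lambda}{\kappa}(\overline{\kappa}_n-\kappa_n)=0$, that is, if and only if $\lambda\equiv 0$ or $\overline{\kappa}_n=\kappa_n$.

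Next I would bring in the second hypothesis. By $(\ref{1})$ the condition $\lambda\equiv 0$ says precisely that the position vector of $\alpha$ points along the binormal $b$; since $\alpha$ is assumed not to lie in that direction --- this is the non-degeneracy that makes $(\ref{3.4.B})$ bite, playing here the role that $\mu\neq 0$ plays in the tangential asymptotic corollary --- we have $\lambda\not\equiv 0$, so the first alternative is excluded. Hence the invariance of the surface-normal component forces the pointwise identity $\overline{\kappa}_n=\kappa_n$ along the curve.

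Finally I would unwind the definition of an asymptotic curve from Section~2: by definition $\alpha$ is asymptotic on ${\bf M}$ precisely when $\kappa_n\equiv 0$, and $\overline{\alpha}$ is asymptotic on $\overline{{\bf M}}$ precisely when $\overline{\kappa}_n\equiv 0$; since $\overline{\kappa}_n=\kappa_n$ was established in the previous paragraph, these two conditions are equivalent, which is exactly the assertion of the corollary. This finishes the argument.

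I do not expect a genuine obstacle here: the surface-normal deviation theorem has already absorbed all the computation with the second fundamental form coefficients and their invariance under $J$, so the corollary reduces to a short logical deduction. The single point that needs care is bookkeeping --- the coefficient required to be nonvanishing is $\lambda$, the principal-normal component of the position vector of $\alpha$ (equivalently, $\alpha$ not lying in the binormal direction), rather than $\mu$; this is the analogue, for the surface-normal component, of the hypothesis used in the tangential asymptotic corollary.
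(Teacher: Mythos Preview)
Your argument is correct and follows exactly the paper's route: invoke the preceding corollary together with the deviation formula $(\ref{3.4.B})$ to force $\overline{\kappa}_n=\kappa_n$, then unwind the definition of asymptotic. Your bookkeeping is in fact sharper than the paper's own wording: the non-degeneracy needed is $\lambda\neq 0$ (position vector not along the \emph{binormal}), which is precisely what Corollary~3.6 supplies, whereas the statement and proof of this corollary in the paper read ``not in the normal direction'' --- apparently a slip carried over verbatim from the tangential case.
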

\begin{proof}
From Corollary $3.6.$,  $\bar{\alpha}\cdot{\bf \bar{N}}=\alpha\cdot {\bf N}$ and the position vector of $\alpha(s)$ is not in the normal direction of $\alpha$ if and only if $\kappa_n=\bar{\kappa}_n$.\\
Therefore $\alpha(s)$ is asymptotic if and only if $\kappa_n=0$ if and only if $\bar{\kappa}_n=0$ if and only if $\bar{\alpha}(s)$ is asymptotic.
\end{proof}

Since $\bf T$ and $\bf N$ are perpendicular vector at $\alpha(s)$, hence $\{\bf T,\bf N, \bf T\times\bf N\}$ form an orthogonal system at every point of the normal curve $\alpha(s)$.
\begin{theorem}
Let $J:{\bf M}\rightarrow {\overline{\bf M}}$ be an isometry and $\alpha(s)$ be a normal curve on ${\bf M}$. Then for the component of $\alpha(s)$ along $\bf T\times\bf N$, we have
\begin{equation}\label{3.6.C}
\bar{\alpha}\cdot({\bf \bar T}\times{\bf \bar N})-\alpha\cdot({\bf T}\times{\bf N}) =\frac{\mu}{\kappa}(\bar{\kappa}_n-\kappa_n) \{a(F v^\prime-Eu^\prime)+b(Gv^\prime-Fu^\prime)\}
\end{equation}
\end{theorem}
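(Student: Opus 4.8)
The plan is to reduce the $\overline{{\bf T}}\times\overline{{\bf N}}$ component to the tangential‑component computation already carried out in the proof of the theorem on tangential components, i.e. to equations (\ref{3.2.a}) and (\ref{3.2.b}). First I would write ${\bf T}=a\varphi_u+b\varphi_v$ and expand
$${\bf T}\times{\bf N}=a\,(\varphi_u\times{\bf N})+b\,(\varphi_v\times{\bf N}).$$
Following the paper's convention, in which ${\bf N}$ denotes $\varphi_u\times\varphi_v$, the vector triple product identity $x\times(y\times z)=y\,(x\cdot z)-z\,(x\cdot y)$ gives
$$\varphi_u\times{\bf N}=F\varphi_u-E\varphi_v,\qquad \varphi_v\times{\bf N}=G\varphi_u-F\varphi_v,$$
so that ${\bf T}\times{\bf N}=(aF+bG)\varphi_u-(aE+bF)\varphi_v$, whence
$$\alpha\cdot({\bf T}\times{\bf N})=(aF+bG)(\alpha\cdot\varphi_u)-(aE+bF)(\alpha\cdot\varphi_v).$$
Running the identical manipulation on $\overline{\bf M}$ with $\overline{{\bf N}}=\overline{\varphi}_u\times\overline{\varphi}_v$ and $\overline{{\bf T}}=a\overline{\varphi}_u+b\overline{\varphi}_v$ produces
$$\overline{\alpha}\cdot(\overline{{\bf T}}\times\overline{{\bf N}})=(a\overline{F}+b\overline{G})(\overline{\alpha}\cdot\overline{\varphi}_u)-(a\overline{E}+b\overline{F})(\overline{\alpha}\cdot\overline{\varphi}_v).$$

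Next I would invoke the invariance of the first fundamental form, $\overline{E}=E$, $\overline{F}=F$, $\overline{G}=G$, to replace the barred coefficients by the unbarred ones in the last display, and then subtract the two expressions to obtain
$$\overline{\alpha}\cdot(\overline{{\bf T}}\times\overline{{\bf N}})-\alpha\cdot({\bf T}\times{\bf N})=(aF+bG)\big(\overline{\alpha}\cdot\overline{\varphi}_u-\alpha\cdot\varphi_u\big)-(aE+bF)\big(\overline{\alpha}\cdot\overline{\varphi}_v-\alpha\cdot\varphi_v\big).$$
Substituting $\overline{\alpha}\cdot\overline{\varphi}_u-\alpha\cdot\varphi_u=v^\prime\frac{\mu}{\kappa}(\overline{\kappa}_n-\kappa_n)$ and $\overline{\alpha}\cdot\overline{\varphi}_v-\alpha\cdot\varphi_v=u^\prime\frac{\mu}{\kappa}(\overline{\kappa}_n-\kappa_n)$ from (\ref{3.2.a}) and (\ref{3.2.b}), factoring out $\frac{\mu}{\kappa}(\overline{\kappa}_n-\kappa_n)$, and regrouping via $(aF+bG)v^\prime-(aE+bF)u^\prime=a(Fv^\prime-Eu^\prime)+b(Gv^\prime-Fu^\prime)$ then yields (\ref{3.6.C}) exactly.

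The one step that needs a little care is the bookkeeping with the surface normal: one must apply the triple product identity with the same orientation convention on ${\bf M}$ and on $\overline{{\bf M}}$, and one must stick to the (unnormalized) convention ${\bf N}=\varphi_u\times\varphi_v$ used elsewhere in the paper, since a normalization factor $\sqrt{EG-F^2}$ would otherwise appear (it is itself isometry‑invariant, so it would simply rescale the right‑hand side, but it is absent from the stated formula). The payoff of this reduction is that $\overline{\alpha}\cdot(\overline{{\bf T}}\times\overline{{\bf N}})$ is expressed purely in terms of $\overline{\alpha}\cdot\overline{\varphi}_u$, $\overline{\alpha}\cdot\overline{\varphi}_v$ and $\overline{E},\overline{F},\overline{G}$, so that no information about how $J_\ast$ acts on cross products is ever needed; the isometry enters only through invariance of the first fundamental form and through the already established formulas (\ref{3.2.a})--(\ref{3.2.b}). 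Everything else is routine algebra, entirely parallel to the proofs of the tangential and surface‑normal component theorems.
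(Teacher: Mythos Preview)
Your proposal is correct and follows essentially the same route as the paper: expand ${\bf T}\times{\bf N}$ in the tangent basis via the triple product identity, reduce both $\alpha\cdot({\bf T}\times{\bf N})$ and its barred counterpart to combinations of $\alpha\cdot\varphi_u$, $\alpha\cdot\varphi_v$ with coefficients $E,F,G$, and then apply (\ref{3.2.a})--(\ref{3.2.b}). Your added remark about the unnormalized convention ${\bf N}=\varphi_u\times\varphi_v$ is a useful clarification that the paper leaves implicit.
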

\begin{proof}
From $(\ref{2})$, we have
\begin{eqnarray*}
\alpha\cdot(\bf T\times\bf N)&=& \alpha\cdot\{(a\varphi_u+b\varphi_v)\times\bf N\},\\
&=& \alpha\cdot\{a(F\varphi_u-E\varphi_v)+b(G\varphi_u-F\varphi_v)\},\\
&=& (aF+bG)\alpha\cdot\varphi_u-(aE+bF)\alpha\cdot\varphi_v.
\end{eqnarray*}
Therefore using $(\ref{3.2.a})$ and $(\ref{3.2.b})$ we get
\begin{eqnarray*}
\bar{\alpha}\cdot(\bar{\bf T}\times\bar{\bf N})-\alpha\cdot(\bf T\times\bf N)&=& (aF+bG)(\overline{\alpha}\cdot \overline{\varphi}_u -\alpha\cdot \varphi_u)-(aE+bF)\\
&&(\overline{\alpha}\cdot \overline{\varphi}_v -\alpha\cdot \varphi_v),\\
&=& \frac{\mu}{\kappa}(\bar{\kappa}_n-\kappa_n) \{a(F v^\prime-Eu^\prime)+b(Gv^\prime-Fu^\prime)\}.
\end{eqnarray*}
This proves our claim.
\end{proof}

\begin{corollary}
Let $J:{\bf M}\rightarrow {\overline{\bf M}}$ be an isometry and $\alpha(s)$ be a normal curve on ${\bf M}$. Then the component of the normal curve $\alpha(s)$ along $\bf T\times\bf N$ is invariant if and only if any one of the following holds:
\begin{itemize}
\item[(i)]The position vector of $\alpha(s)$ is in the normal direction of $\alpha$.
\item[(ii)] The normal curvature is invariant.
\end{itemize}
\end{corollary}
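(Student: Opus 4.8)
The plan is to read the statement off directly from Theorem~3.8. By $(\ref{3.6.C})$,
\[
\bar{\alpha}\cdot({\bf \bar T}\times{\bf \bar N})-\alpha\cdot({\bf T}\times{\bf N})=\frac{\mu}{\kappa}(\bar{\kappa}_n-\kappa_n)\bigl\{a(F v^\prime-Eu^\prime)+b(Gv^\prime-Fu^\prime)\bigr\},
\]
so the component of $\alpha(s)$ along ${\bf T}\times{\bf N}$ is invariant, i.e. $\bar{\alpha}\cdot({\bf \bar T}\times{\bf \bar N})=\alpha\cdot({\bf T}\times{\bf N})$, if and only if
\[
\frac{\mu}{\kappa}(\bar{\kappa}_n-\kappa_n)\bigl\{a(F v^\prime-Eu^\prime)+b(Gv^\prime-Fu^\prime)\bigr\}=0 .
\]
The whole content of the corollary is therefore to decide which of the three factors is allowed to be responsible for this vanishing.

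First I would dispose of the geometric factor $a(Fv^\prime-Eu^\prime)+b(Gv^\prime-Fu^\prime)$. Rewriting it as ${\bf T}\cdot(v^\prime\varphi_v-u^\prime\varphi_u)$ exhibits it as the inner product of ${\bf T}$ with a \emph{fixed nonzero} tangent vector: indeed $\varphi_u,\varphi_v$ are linearly independent and, since $\alpha$ is unit speed, $(u^\prime,v^\prime)\neq(0,0)$; equivalently, requiring $Fv^\prime-Eu^\prime=0$ and $Gv^\prime-Fu^\prime=0$ simultaneously forces $u^\prime=v^\prime=0$, because the coefficient determinant of this linear system in $(u^\prime,v^\prime)$ equals $-(EG-F^{2})\neq0$ on a regular surface. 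Hence, reading the statement (exactly as in Corollary~\ref{crly3.3}) as invariance of the component along ${\bf T}\times{\bf N}$ for \emph{every} tangent vector ${\bf T}=a\varphi_u+b\varphi_v$, this bracket cannot vanish for all $a,b$, and the displayed condition reduces to
\[
\frac{\mu}{\kappa}(\bar{\kappa}_n-\kappa_n)=0,\qquad\text{that is,}\qquad \mu=0\ \text{ or }\ \bar{\kappa}_n=\kappa_n .
\]

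Finally I would translate the two alternatives into the asserted geometric conditions. The case $\bar{\kappa}_n=\kappa_n$ is precisely (ii). If instead $\mu=0$, then $(\ref{1})$ gives $\alpha(s)=\lambda(s)n(s)$, so the position vector of the normal curve lies along its principal normal $n$, which is (i); conversely, either of (i), (ii) clearly annihilates the product above, giving the ``if'' direction. This is entirely parallel to the proofs of Corollary~\ref{crly3.3} and Corollary~3.6, the only new ingredient being the first-fundamental-form factor $a(Fv^\prime-Eu^\prime)+b(Gv^\prime-Fu^\prime)$. I expect no genuine computational difficulty; the one point that needs care is the bookkeeping just described, namely fixing the convention that ${\bf T}$ ranges over all tangent vectors, since for the single exceptional direction ${\bf T}\perp(v^\prime\varphi_v-u^\prime\varphi_u)$ the invariance would hold trivially without (i) or (ii), so the corollary must be phrased with this quantifier in mind.
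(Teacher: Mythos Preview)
Your proof is correct and follows exactly the paper's approach: invoke $(\ref{3.6.C})$, set the right-hand side to zero, and read off the alternatives $\mu=0$ or $\bar{\kappa}_n=\kappa_n$, then translate $\mu=0$ via $(\ref{1})$ into condition~(i). The only difference is that you supply an explicit argument---absent from the paper's terse proof---for why the bracket $a(Fv'-Eu')+b(Gv'-Fu')={\bf T}\cdot(v'\varphi_v-u'\varphi_u)$ cannot account for the vanishing when ${\bf T}$ ranges over all tangent vectors, a point the paper leaves implicit.
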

\begin{proof}
From $(\ref{3.6.C})$,  $\bar{\alpha}\cdot(\bar{\bf T}\times\bar{\bf N})=\alpha\cdot(\bf T\times\bf N)$ if and only if 
\begin{eqnarray*}
\frac{\mu}{\kappa}(\bar{\kappa}_n-\kappa_n) \{a(Fv^\prime-Eu^\prime)+b(Gv^\prime-Fu^\prime)\}=0\\
\text{i.e., if and only if } \mu=0 \text{ or }\bar{\kappa}_n-\kappa_n=0 .
\end{eqnarray*}
If $\mu=0$ then from $(\ref{1})$, we see that $\alpha(s)=\lambda(s)n(s)$, i.e., the position vector of the normal curve $\alpha(s)$ is in the normal direction of itself.
\end{proof}
\begin{corollary}
Let $J:{\bf M}\rightarrow {\overline{\bf M}}$ be an isometry and $\alpha(s)$ be a normal curve on ${\bf M}$. The component of the normal curve $\alpha(s)$ along $\bf T\times\bf N$ is invariant and the position vector of $\alpha(s)$ is not in the normal direction of $\alpha$, then $\alpha(s)$ is asymptotic if and only if $\bar{\alpha}(s)$ is asymptotic.
\end{corollary}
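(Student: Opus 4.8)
The plan is to mirror the proofs of the two preceding corollaries --- the ones treating the components of $\alpha(s)$ along ${\bf T}$ and along ${\bf N}$ --- since the statement to be proved is their ${\bf T}\times{\bf N}$ analogue. The whole argument rests on the identity $(\ref{3.6.C})$ of the preceding theorem together with the immediately preceding corollary (Corollary~$3.9$), which records exactly when the ${\bf T}\times{\bf N}$-component is invariant.

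First I would recall, from Corollary~$3.9$, that the hypothesis $\bar{\alpha}\cdot(\bar{\bf T}\times\bar{\bf N})=\alpha\cdot({\bf T}\times{\bf N})$ is equivalent, via $(\ref{3.6.C})$, to
\[
\frac{\mu}{\kappa}(\bar{\kappa}_n-\kappa_n)\{a(Fv^\prime-Eu^\prime)+b(Gv^\prime-Fu^\prime)\}=0,
\]
hence to $\mu\equiv0$ or $\bar{\kappa}_n=\kappa_n$. Next I would bring in the second hypothesis: if $\mu\equiv0$ then $(\ref{1})$ gives $\alpha(s)=\lambda(s)n(s)$, so that the position vector of $\alpha(s)$ would lie in the normal direction of $\alpha$, contrary to assumption. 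Therefore $\mu\not\equiv0$, and we are forced into the case $\bar{\kappa}_n=\kappa_n$.

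Finally I would close the equivalence using the definition of an asymptotic curve, namely $\kappa_n=0$ (see $(\ref{se1})$): $\alpha(s)$ is asymptotic if and only if $\kappa_n=0$, if and only if $\bar{\kappa}_n=0$ (because $\kappa_n=\bar{\kappa}_n$), if and only if $\bar{\alpha}(s)$ is asymptotic.

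I do not expect a genuine obstacle here; the result is a direct corollary of $(\ref{3.6.C})$ and Corollary~$3.9$. The one point that deserves a word of care is the scalar factor $a(Fv^\prime-Eu^\prime)+b(Gv^\prime-Fu^\prime)$ multiplying $\bar{\kappa}_n-\kappa_n$ in $(\ref{3.6.C})$: in order to extract $\bar{\kappa}_n=\kappa_n$ once $\mu\not\equiv0$ one needs the ${\bf T}\times{\bf N}$-component to be invariant for a tangent vector ${\bf T}=a\varphi_u+b\varphi_v$ for which this factor does not vanish, and this is precisely what is already built into the statement of Corollary~$3.9$, so nothing new must be established.
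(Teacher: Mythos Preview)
Your proposal is correct and follows exactly the paper's own argument: invoke Corollary~$3.9$ together with the hypothesis that the position vector is not in the normal direction to force $\kappa_n=\bar{\kappa}_n$, and then read off the asymptotic equivalence from the definition $\kappa_n=0$. Your extra remark about the scalar factor $a(Fv'-Eu')+b(Gv'-Fu')$ is a point the paper leaves implicit, but as you note it is already absorbed into Corollary~$3.9$.
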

\begin{proof}
From Corollary $3.9.$,  $\bar{\alpha}\cdot(\bar{\bf T}\times\bar{\bf N})=\alpha\cdot(\bf T\times\bf N)$ and the position vector of $\alpha(s)$ is not in the normal direction of $\alpha$ if and only if $\kappa_n=\bar{\kappa}_n$.\\
Therefore $\alpha(s)$ is asymptotic if and only if $\kappa_n=0$ if and only if $\bar{\kappa}_n=0$ if and only if $\bar{\alpha}(s)$ is asymptotic.
\end{proof}

\begin{proposition}
The geodesic curvature of a smooth curve and in particular of a normal curve remains invariant under isometry.
\end{proposition}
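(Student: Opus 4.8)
The plan is to exhibit the geodesic curvature $\kappa_g$ as a quantity built entirely from intrinsic data, so that its invariance becomes a formality once the isometry is applied. Concretely, starting from the definition $\kappa_g=\alpha''\cdot({\bf N}\times\alpha')$, I will rewrite the right-hand side so that it involves only $E,F,G$, their first order partial derivatives, and the coordinate functions $u(s),v(s)$ of $\alpha$ together with $u',v',u'',v''$; none of these is disturbed by $J$.

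First I would put ${\bf N}\times\alpha'$ into the coordinate basis. Writing ${\bf N}=(\varphi_u\times\varphi_v)/\sqrt{EG-F^2}$ and using $(a\times b)\times c=(a\cdot c)b-(b\cdot c)a$ --- the same manipulation already carried out for ${\bf T}\times{\bf N}$ in the preceding theorem --- one obtains ${\bf N}\times\varphi_u=(E\varphi_v-F\varphi_u)/\sqrt{EG-F^2}$ and ${\bf N}\times\varphi_v=(F\varphi_v-G\varphi_u)/\sqrt{EG-F^2}$, hence, from \eqref{2},
\[
{\bf N}\times\alpha'=\frac{1}{\sqrt{EG-F^2}}\big[-(Fu'+Gv')\,\varphi_u+(Eu'+Fv')\,\varphi_v\big].
\]
Next, differentiating \eqref{2} gives $\alpha''=u''\varphi_u+v''\varphi_v+u'^2\varphi_{uu}+2u'v'\varphi_{uv}+v'^2\varphi_{vv}$, and taking the inner product with the last display writes $\kappa_g$ as $(EG-F^2)^{-1/2}$ times a combination of $\alpha''\cdot\varphi_u$ and $\alpha''\cdot\varphi_v$ whose coefficients are built from $E,F,G,u',v'$. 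Each of $\alpha''\cdot\varphi_u$ and $\alpha''\cdot\varphi_v$ expands into the terms $u''E,v''F$ (resp. $u''F,v''G$) together with the six products $\varphi_{uu}\cdot\varphi_u$, $\varphi_{uv}\cdot\varphi_u$, $\varphi_{vv}\cdot\varphi_u$, $\varphi_{uu}\cdot\varphi_v$, $\varphi_{uv}\cdot\varphi_v$, $\varphi_{vv}\cdot\varphi_v$ (times $u'^2,u'v',v'^2$), and at this point I would substitute the identities \eqref{1.16}--\eqref{1.17}, which render these six products in terms of $E_u,E_v,F_u,F_v,G_u,G_v$. After the substitution $\kappa_g$ has become an explicit function of $E,F,G$, their first partials, and $u',v',u'',v''$ alone.

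Finally I would invoke the isometry. Since $\overline\varphi=J\circ\varphi$, the image curve $\overline\alpha=J\circ\alpha$ carries the same parametrizing functions $u(s),v(s)$, so $u',v',u'',v''$ are unaffected; and $\overline E=E$, $\overline F=F$, $\overline G=G$, whence $\overline E\,\overline G-\overline F^2=EG-F^2$ and, by \eqref{1.15}, all first partials of $E,F,G$ agree as well. Substituting into the formula of the previous paragraph yields $\overline\kappa_g=\kappa_g$. Since the normal-curve hypothesis was never used, the same conclusion holds for any smooth unit speed curve on ${\bf M}$, which is exactly the assertion of the proposition; this is in contrast with $\kappa_n$, which the earlier theorems show need not be preserved. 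The single point that wants care --- and it is care rather than genuine difficulty --- is that ${\bf N}$ and the second derivatives $\varphi_{uu},\varphi_{uv},\varphi_{vv}$ are themselves extrinsic and are \emph{not} preserved by $J$; the whole content of the argument is the observation that they enter $\kappa_g$ only through $\varphi_u\times\varphi_v$ (of length $\sqrt{EG-F^2}$) and through the dot products $\varphi_{uu}\cdot\varphi_u,\dots,\varphi_{vv}\cdot\varphi_v$, all of which \eqref{1.16}--\eqref{1.17} convert into first-fundamental-form data. Once that cancellation of the genuinely extrinsic terms is displayed, the proposition is immediate. Alternatively, one may simply recognize the bracketed quantity above as the classical Liouville formula for $\kappa_g$ in terms of the Christoffel symbols and appeal to the intrinsic character of the latter.
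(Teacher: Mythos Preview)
Your proposal is correct. The paper's proof is shorter: it simply quotes the Beltrami formula expressing $\kappa_g$ in terms of the Christoffel symbols $\Gamma^k_{ij}$, $u',v',u'',v''$ and $\sqrt{EG-F^2}$, then writes out the Christoffel symbols explicitly in terms of $E,F,G$ and their first partials, and concludes via \eqref{1.15}. Your argument is the same in spirit---show that $\kappa_g$ is built from first-fundamental-form data only---but you derive this from the definition $\kappa_g=\alpha''\cdot({\bf N}\times\alpha')$ rather than invoking the Beltrami formula as a known result; indeed you note at the end that one could shortcut to the Christoffel/Liouville formula, which is exactly what the paper does. Your route is more self-contained and makes explicit why the extrinsic objects ${\bf N},\varphi_{uu},\varphi_{uv},\varphi_{vv}$ cancel out, at the cost of a longer computation; the paper's route is quicker but relies on the reader accepting the Beltrami formula off the shelf.
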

\begin{proof}
Let $\alpha$ be a curve on a parametric surface ${\bf M}$, then the geodesic curvature is given by Beltrami formula as:
\begin{equation}\label{3.18}
\kappa_g=\Big[\Gamma_{11}^2{u^\prime}^3+(2\Gamma_{12}^2-\Gamma_{11}^1){u^\prime}^2 v^\prime +(\Gamma_{22}^2-2\Gamma_{12}^1)u^\prime {v^\prime}^2-\Gamma_{22}^1{v^\prime}^3+u^\prime v^{\prime\prime}-u^{\prime \prime}v^\prime\Big] \sqrt{EG-F^2},
\end{equation}
where $\Gamma_{ij}^k$ are the Christoffel symbols of the second kind given by
\begin{equation}\label{3.19}\left\{
\begin{array}{ll}
\Gamma_{11}^1=\frac{1}{2W^2}\left\{GE_u+F[E_v-2F_u]\right\}, \quad \Gamma_{22}^2=\frac{1}{2W^2}\left\{EG_v+F[G_v-2F_v]\right\}\\
\Gamma_{11}^2=\frac{1}{2W^2}\left\{E[2F_u-E_v]-FE_v\right\},\quad \Gamma_{22}^1=\frac{1}{2W^2}\left\{G[2F_v-G_u]-FG_v\right\}\\
\Gamma_{12}^2=\frac{1}{2W^2}\left\{EG_u-FE_v\right\}=\Gamma_{21}^2,\quad
\Gamma_{21}^1=\frac{1}{2W^2}\left\{GE_v-FG_u\right\}=\Gamma_{12}^1
\end{array}
\right.
\end{equation}
and $W=\sqrt{EG-F^2}$. Thus, in view of (\ref{1.15}), (\ref{3.18}) and (\ref{3.19}), we see that $\overline{\kappa}_g =\kappa_g$. In particular the same holds for a normal curve.
\end{proof}
\section{acknowledgment}
 The third author greatly acknowledges to The University Grants Commission, Government of India for the award of Junior Research Fellow.


\begin{thebibliography}{3}

\bibitem{AP01}
A. Pressley,
\textit{Elementary differential geometry}, 
Springer-Verlag, 2001.
\bibitem{1}
M. P. do Carmo,
\textit{Differential geometry of curves and surfaces}, 
Prentice-Hall, Inc, New Jersey, 1976.
 \bibitem{2}
 B.-Y. Chen,
  \textit{What does the position vector of a space curve always lie in its rectifying plane}?, 
 Amer. Math. Monthly, \textbf{110} (2003), 147-152.
 \bibitem{BYC05}
  B.-Y. Chen and F. Dillen,
   \textit{Rectfying curve as centrode and extremal curve.}, 
  Bull. Inst. Math. Acad. Sinica, \textbf{33}, no. 2, (2005), 77-90.
  \bibitem{3}
   S. Deshmukh, B.-Y. Chen and S. H. Alshammari,
    \textit{On a rectifying curves in euclidean 3-space}, 
   Turk. J. Math., \textbf{42} (2018), 609-620.
\bibitem{7g}
   M. Grbovi\'{c} and Emilija Ne\v{s}ovi\'{c},
    \textit{Some relations between rectifying and normal curves in
Minkowski 3-space}, Math. Commun., \textbf{17} (2012), 655-664.   
   
   \bibitem{7f}
   K. Ilarslan, M. Sakaki and A. U{\c{c}}um,
    \textit{On osculating, normal and rectifying bi-null curves in $\mathbb{R}_2^5$}, Novi Sad J. Math. , \textbf{48} (2018), 9-20.
\bibitem{4}
   A. A Shaikh and P. R. Ghosh
    \textit{Rectifying curves on a smooth surface immersed in the Euclidean space}, 
  to appear in Indian J. Pure Appl. Math., (2018).
   \bibitem{AP2}
     A. A Shaikh and P. R. Ghosh
       \textit{Rectifying and osculating curves on a smooth surface}, 
     to appear in Indian J. Pure Appl. Math., (2018).
      \bibitem{AP3}
         A. A Shaikh and P. R. Ghosh
          \textit{Curves on a smooth surface with position vectors lie in the tangent plane}, Submited, 
         (2018).
\end{thebibliography}
\end{document}